\newtheorem{theorem}{Theorem}
\newtheorem{lemma}[theorem]{Lemma}
\newtheorem{definition}[theorem]{Definition}
\newcommand{\Rset}{\mathbb{{R}}}
\newcommand{\Zset}{\mathbb{{Z}}}
\newcommand{\Nset}{\mathbb{{N}}}
\author[Abadias]{Luciano Abadias}
\address{Centro Universitario de la Defensa, Instituto Universitario de Matem\'aticas  ́
	y Aplicaciones, 50090 Zaragoza, Spain.}
\email{labadias@unizar.es}
\author[De Le\'on-Contreras]{Marta De Le\'on-Contreras}
\address{Departamento de Matem\'aticas, Facultad de Ciencias, Universidad Aut\'onoma de Madrid, 28049 Madrid, Spain.}
\email{marta.leon@uam.es}
\author[Torrea]{Jos\'e L. Torrea}
\address{Departamento de Matem\'aticas, Facultad de Ciencias, Universidad Aut\'onoma de Madrid, 28049 Madrid, Spain.}
\email{joseluis.torrea@uam.es}
\keywords{Discrete fractional integrals, Schauder estimates, Semigroups}
\subjclass{35R11, 35R09,34A08, 26A33}
\thanks{L. Abadias is partially supported by Project MTM2016-77710-P, DGI-FEDER, of the MCYTS, Spain. Second and third authors have been partially supported by MTM2015-66157-C2-1-P, MINECO-FEDER.}
\begin{document}




\title[Schauder estimates for discrete fractional integrals]{Schauder estimates for  discrete fractional integrals}





\begin{abstract}
  In this note we focus on the discrete fractional integrals as a natural continuation of our previous work about nonlocal fractional derivatives, discrete and continuous. We define the discrete fractional integrals by using the semigroup theory and we
		study the regularity of {discrete fractional integrals} on the discrete H\"older spaces, which it is known in the differential equations field as the discrete Schauder estimates. 
\end{abstract}
\maketitle


\section{Introduction}

Fractional calculus extends the definitions of derivatives and integrals to noninteger orders. It was born in 1695, with a letter from L' H\^opital to Leibniz, where he asked what would be the derivative of order $1/2$. After this moment, a lot of authors have worked in this field, and in the last century, the interest in fractional operators and fractional differential equations has grown exponentially because of the big amount of applications it has.
	
	In this note we will focus on discrete fractional operators, in particular on the discrete fractional integrals as a continuation of our recent work \cite{ALT}. We will define the discrete fractional integrals by the semigroup theory approach and we will take some advantages of the method to get some regularity results. This point of view to treat discrete fractional operators has been recently used in \cite{BCFR},\cite{CGRTV} and \cite{Ciaurri}, among others works. { Of course, in the last years the discrete fractional integrals have also been considered in a lot of papers (see for instance \cite{A,ALMV,L} and references therein), but not from the point of view of the semigroup theory as fractional powers.}

	For $f:\Zset\rightarrow\Rset$ ,  we define
	{\sl ``the discrete derivative from the right''} and {\sl ``the discrete
		derivative from the left''} as the operators given by the formulas
	\begin{equation*}
	\delta_{\rm right}f(n)=f(n)-f(n+1)\:\: and\:\:\delta_{\rm left}f(n)=f(n)-f(n-1).
	\end{equation*}

As we did in \cite{ALT}, we shall use semigroup language as an alternative approach to discrete fractional integrals.
Given the function  $G_t(n)=e^{-t}\frac{t^{n}}{n!},\, n\in\Nset_0$,  we prove in \cite[Proposition 2.2]{ALT} that the operators
	\begin{equation*}T_{t,+}f(n)=\displaystyle\sum_{j=0}^\infty G_t(j)f(n+j), \hbox{ and } T_{t,-}f(n)=\displaystyle\sum_{j=0}^\infty G_t(j)f(n-j),\qquad t>0,\,n\in\Zset.\end{equation*}
are markovian semigroups on $\ell^p(\mathbb{Z}),    \, 1\leq p\leq \infty$,  whose infinitesimal generators are $-\delta_{\rm right}$ and $-\delta_{\rm left}$, respectively. In addition, we proved that $u(n,t) = T_{t,+}f(n)$ solves the first order Cauchy problem
\begin{equation*}
\left\{\begin{array}{ll}
	\partial_t u(n,t)+ \delta_{\rm right}u(n,t) =0,&n\in\Zset,\,t\geq 0,\\
	u(n,0)=f(n),&n\in\Zset,
\end{array} \right.
\end{equation*}
and  $v(n,t) = T_{t,-}f(n)$ satisfies the analogous Cauchy problem for $\delta_{\rm left}$.
	
We recall to the reader the following Gamma function formulas for an operator $L$, \begin{equation*}
	  L^\alpha=\frac{1}{\Gamma(-\alpha)}\int_0^\infty(e^{-t L}-1)\frac{dt}{t^{1+\alpha}} \quad \hbox{  and  }\quad
	  L^{-\alpha}=\frac{1}{\Gamma(\alpha)}\int_0^\infty e^{-t L}\frac{dt}{t^{1-\alpha}},
	  \end{equation*}
		  where $0<\alpha<1$ and $e^{-t L}$ is the associated semigroup, see \cite{Bernardis,  Stein, ST,Yosida}.
	  In particular,  we  have that the powers of order $\alpha$ of the discrete derivatives can be written by
	  \begin{align*}
	  (\delta_{\rm right})^\alpha f(n)=\frac{1}{\Gamma(-\alpha)}\int_0^\infty \frac{T_{t,+}f(n) -f(n)}{t^{1+\alpha}}dt, \  0< \alpha < 1,
	  \end{align*}
	  and
	 	  \begin{align}\label{'eq19'}
	  (\delta_{\rm right})^{-\alpha} f(n)=\frac{1}{\Gamma(\alpha)}\int_0^\infty \frac{T_{t,+}f(n)}{t^{1-\alpha}}dt, \  0< \alpha < 1,
	  \end{align}
	  whenever the integrals converges, and the corresponding formula for $(\delta_{\rm left})^\alpha, \, -1< \alpha <1.$
	
	  In order to get regularity results for the discrete fractional integrals in a more general setting, we will consider our operators on a mesh of step length $h>0$ instead of the integers mesh, that is, our functions will be defined on $\Zset_h=\{jh:\:j\in\Zset\},$ for $h>0$. Hence, for $u:\Zset_h\to\Rset,$ with $h>0$, the first order difference operators on $\Zset_{h}$ are given by
	  \begin{align*}\delta_{\rm right}u(hn)=\frac{u(hn)-u(h(n+1))}{h},\quad \delta_{\rm left}u(hn)=\frac{u(hn)-u(h(n-1))}{h},\quad n\in\Zset.\end{align*}
	 In \cite{ALT} we also prove that $\{T_{\frac{t}{h},\pm}\}_{t\geq0}$ are the associated semigroups on $\ell^{p}(\Zset_h)$.

	 The main results of this note are the discrete Schauder estimates for the discrete fractional integrals. Schauder estimates are very useful in the field of differential equations because they concern the regularity of solutions to partial differential equations. Recently, Schauder estimates have been used to get the regularity of fractional operadors in the adapted H\"older spaces, see for instance \cite{ST2}.
	
	  In our case, we need some special discrete H\"older spaces, called $C^{k,\beta}_h$. These spaces were introduced in \cite{Ciaurri}. To see the definition of these spaces, see Section \ref{regularidad}.
	   \begin{theorem}[Discrete Schauder estimates] \label{Teo} 
	   	Let $0< \beta,\alpha< 1$, and $u\in \ell_{-\alpha,h}$, see \eqref{espacio}. \begin{itemize}
	   		\item[(i)] Let $u\in C_{h}^{0,\beta}$ and $\alpha+\beta<1.$  Then $(\delta_{\rm right})^{-\alpha}u\in C_{h}^{0,\beta+\alpha}$ and \begin{align*}\lVert (\delta_{\rm right})^{-\alpha}u \rVert_{C_{h}^{0,\beta+\alpha}}\leq C\lVert u \rVert_{C_{h}^{0,\beta}}.\end{align*}
	   		\item[(ii)] Let $u\in C_{h}^{0,\beta}$ and $\alpha+\beta
	   		>1.$ Then $(\delta_{\rm right})^{-\alpha}u\in C_{h}^{1,\beta+\alpha-1}$ and \begin{align*}\lVert (\delta_{\rm right})^{-\alpha}u \rVert_{C_{h}^{1,\beta+\alpha-1}}\leq C\lVert u \rVert_{C_{h}^{0,\beta}}.\end{align*}
	   		\item[(iii)] Let $u\in C_{h}^{k,\beta}$ and assume that $k+\beta+\alpha$ is not an integer. Then $(\delta_{\rm right})^{-\alpha}u\in C_{h}^{l,s}$ where $l$ is the integer part of $k+\beta+\alpha$ and $s=k+\beta+\alpha-l$.
	   		\item[(iv)] Let $u\in\ell^\infty$. Then $(\delta_{\rm right})^{-\alpha}u\in C_{h}^{0,\alpha}$ and \begin{align*}\lVert (\delta_{\rm right})^{-\alpha}u \rVert_{C_{h}^{0,\alpha}}\leq C\lVert u \rVert_{\infty}.\end{align*}
	   	\end{itemize}
	   	The positive constants $C$ are independent of $h$ and $u.$
	   \end{theorem}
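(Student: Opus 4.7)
The approach I would take combines the semigroup representation with a dyadic decomposition of the time integral, following the Stinga--Torrea strategy for continuous Schauder estimates (cf.\ \cite{ST2}), adapted to the one-sided discrete setting. Starting from \eqref{'eq19'},
\begin{equation*}
(\delta_{\rm right})^{-\alpha}u(hn)=\frac{1}{\Gamma(\alpha)}\int_0^\infty T_{t/h,+}u(hn)\,t^{\alpha-1}\,dt,
\end{equation*}
where convergence at infinity is ensured by $u\in\ell_{-\alpha,h}$, one analyzes H\"older differences of the left-hand side by splitting this integral at $t=|hn-hm|$.

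The key technical ingredients to establish first are two uniform-in-$h$ semigroup estimates. The first is the H\"older-preservation bound
\begin{equation*}
|T_{t/h,+}u(hn)-T_{t/h,+}u(hm)|\le \|u\|_{C_h^{0,\beta}}|hn-hm|^\beta,
\end{equation*}
which is immediate from $\sum_j G_{t/h}(j)=1$. The second is a smoothing estimate of the form
\begin{equation*}
|\delta_{\rm right}T_{t/h,+}u(hn)|\le C\|u\|_{C_h^{0,\beta}}\,t^{\beta-1},
\end{equation*}
together with its second-order analogue for $\delta_{\rm right}^{2}T_{t/h,+}u$ with gain $t^{\beta-2}$. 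These are derived from the commutation $\delta_{\rm right}T_{t/h,+}u=T_{t/h,+}\delta_{\rm right}u$ and careful estimates on $\sum_j|\partial_t G_{t/h}(j)|(hj)^\beta$ using the asymptotics of the Poisson kernel $G_s(n)=e^{-s}s^n/n!$.

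Given these, (i) follows from the split: on $[0,|hn-hm|]$, H\"older-preservation gives a contribution of order $|hn-hm|^\beta\int_0^{|hn-hm|}t^{\alpha-1}\,dt\simeq|hn-hm|^{\alpha+\beta}$; on $[|hn-hm|,\infty)$, the mean-value bound
\begin{equation*}
|T_{t/h,+}u(hn)-T_{t/h,+}u(hm)|\le|hn-hm|\sup_k|\delta_{\rm right}T_{t/h,+}u(hk)|
\end{equation*}
combined with the smoothing estimate contributes $|hn-hm|\int_{|hn-hm|}^\infty t^{\alpha+\beta-2}\,dt\simeq|hn-hm|^{\alpha+\beta}$ provided $\alpha+\beta<1$. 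Statement (iv) is the limiting case where $u$ is only bounded and $\|\delta_{\rm right}T_{t/h,+}u\|_\infty\le Ct^{-1}\|u\|_\infty$ suffices. For (ii), one exchanges $\delta_{\rm right}$ with the integral (legitimate by dominated convergence) and then estimates the $C_h^{0,\alpha+\beta-1}$ seminorm of $\delta_{\rm right}(\delta_{\rm right})^{-\alpha}u$ by the same dyadic split, now relying on the second-order smoothing bound on the long-time part. Statement (iii) follows by induction on $k$, using that $\delta_{\rm right}$ commutes with $(\delta_{\rm right})^{-\alpha}$ and sends $C_h^{k,\beta}$ into $C_h^{k-1,\beta}$, reducing the $C_h^{k,\beta}$ case to (i) or (ii) applied to $\delta_{\rm right}^{k}u$.

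The main obstacle I anticipate is deriving the smoothing estimates with the correct powers of $t$ and uniformly in $h$. Unlike the heat semigroup, $G_t$ is not self-similar: its mean is $t$ but its spread only $\sqrt{t}$. Extracting the gains $t^{\beta-1}$ and $t^{\beta-2}$ therefore demands matching these two scales, typically via the Gaussian approximation to the Poisson distribution for large $t$ and a direct combinatorial bound near $t=O(1)$. Once these estimates are in hand, the dyadic decomposition in $t$ and the verification that the resulting exponents integrate to quantities of size $|hn-hm|^{\alpha+\beta}$ (respectively $|hn-hm|^{\alpha+\beta-1}$ in (ii)) are routine, yielding constants independent of $h$ by construction.
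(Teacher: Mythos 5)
There is a genuine gap, and it sits exactly where you anticipated trouble: the ``key technical ingredient'' $|\delta_{\rm right}T_{t/h,+}u(hn)|\le C\|u\|_{C_h^{0,\beta}}t^{\beta-1}$ is false, uniformly in $h$, and the scheme cannot be repaired by a finer analysis of the Poisson kernel. The semigroup $T_{s,+}u(n)=\mathbb{E}\,u(n+N_s)$, $N_s\sim\mathrm{Poisson}(s)$, is a Poissonized \emph{translation}: in lattice units its mass travels a distance $s$ but spreads only over $\sqrt{s}$, so its smoothing is governed by the spread, not the drift. Indeed, from $G_s(j-1)-G_s(j)=G_s(j)\frac{j-s}{s}$ and $\sum_j\partial_sG_s(j)=0$ one gets the sharp bound $|\partial_s T_{s,+}v(n)|\le C[v]_{C^{0,\beta}(\Zset)}\min\bigl(1,s^{(\beta-1)/2}\bigr)$, and this rate is attained (take $v(m)=\mathrm{sign}(m-n-s)|m-n-s|^{\beta}$, whose increments align with the sign change of $\partial_sG_s$ at $j\approx s$). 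Rescaling to mesh $h$, the true uniform estimate is $|\delta_{\rm right}T_{t/h,+}u(hn)|\le C[u]_{C_h^{0,\beta}}\min\bigl(h^{\beta-1},h^{(\beta-1)/2}t^{(\beta-1)/2}\bigr)$, which for $t\gg h$ is much larger than $t^{\beta-1}$ (already at $h=1$). Moreover, even if you substitute the correct rate into your split at $t=|hn-hl|$, the long-time integral only converges when $2\alpha+\beta<1$ and the two pieces balance at $t\simeq|hn-hl|^{2}/h$, producing a bound of order $h^{\alpha+\beta}|n-l|^{2\alpha+\beta}$ rather than $h^{\alpha+\beta}|n-l|^{\alpha+\beta}$. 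So the failure is structural: the theorem is true because of \emph{cancellation in $t$} (the bulk of $N_{t/h}$ sweeps past the features of $u$), which absolute-value bounds on $[\,|hn-hl|,\infty)$ discard — exactly as in the continuum, where $e^{-t\partial_x}$ is pure translation with no smoothing at all and the heat-semigroup (Stinga--Torrea) argument is unavailable for the one-sided Weyl integral. The same objection applies to your second-order bound $t^{\beta-2}$ for (ii) and to $\|\delta_{\rm right}T_{t/h,+}u\|_\infty\le Ct^{-1}\|u\|_\infty$ for (iv) (the true rate there is $(ht)^{-1/2}$).

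The paper avoids the semigroup split altogether after using it once to identify the kernel: it writes $(\delta_{\rm right})^{-\alpha}u(nh)=h^{\alpha}\sum_{m\ge n}\Lambda^{-\alpha}(m-n)u(mh)$ and works directly with kernel differences. The cancellation identity of Lemma \ref{nulo}(2) lets one subtract $u(lh)$ from both tails, and then the asymptotics \eqref{asym} together with the telescoping bound \eqref{dif} (differences of $\Lambda^{-\alpha}$ behave like $|n-l|(m-n)^{\alpha-2}$) give (i) and (iv); this is the discrete analogue of the classical kernel proof for Riemann--Liouville/Weyl integrals. Part (ii) is not done by differentiating under the integral with second-order smoothing, but by the algebraic identity $\delta_{\rm right}(\delta_{\rm right})^{-\alpha}u=(\delta_{\rm right})^{1-\alpha}u$ combined with the H\"older estimate for the discrete fractional \emph{derivative} from \cite[Theorem 3.2]{ALT}. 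Your reduction of (iii) by commuting $\delta_{\rm right}$ with $(\delta_{\rm right})^{-\alpha}$ and invoking (i)--(ii) does coincide with the paper's argument, but (i), (ii) and (iv) need the kernel-cancellation route (or some other mechanism capturing the oscillation in $t$), not the claimed smoothing estimates.
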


\section{The approach via  semigroup theory.}
Let $\alpha\in\Rset.$ Along this paper we denote \begin{align*}\Lambda^{-\alpha}(m)=\frac{\alpha(\alpha+1)\cdots(\alpha+m-1)}{m!},\quad m\in\Nset,\end{align*} and $\Lambda^{-\alpha}(0)=1.$ Note that if $\alpha\in\Rset\setminus\{0,-1,-2,\dots\}$ we have that $\Lambda^{-\alpha}(m)=\binom{m+\alpha-1}{m} $ for  $m\in\Nset_0.$ Here we highlight some properties of this kernel. Also, if $0<\alpha<1$, then $\Lambda^{-\alpha}$ is decreasing as a function of $n$, while if $-1<\alpha<0,$ we have $\sum_{n=0}^{\infty}\Lambda^{-\alpha}(n)=0,$ so $\sum_{n=1}^{\infty}\Lambda^{-\alpha}(n)=-1.$

Also, the kernel $(\Lambda^{-\alpha}(n))_{n\in\Nset_0}$ could be defined by the generating function, that is, \begin{align*}\displaystyle\sum_{n=0}^{\infty}\Lambda^{-\alpha}(n)z^n=\frac{1}{(1-z)^{\alpha}},\quad |z|<1,\end{align*} and therefore we have \begin{equation}\label{conv}\Lambda^{-(\alpha+\beta)}(n)=\sum_{j=0}^n\Lambda^{-\alpha}(n-j)\Lambda^{-\beta}(j),\quad  \alpha, \beta \in \Rset,\,n\in\Nset_0.\end{equation}

In the following, we will use the asymptotic behaviour of the sequences $\Lambda^{-\alpha}.$ It is known that for every $\alpha\in\Rset\setminus\{0,-1,-2,\dots\}$, \begin{equation}\label{asym}\Lambda^{-\alpha}(n)=\frac{1}{n^{1-\alpha}\Gamma(\alpha)}\left(1+O\left({1\over n}\right)\right),\quad n\in\Nset,\end{equation}see  {\cite[Vol.I, p.77, (1.18)]{Zygmund}}. { In the case $\alpha\in\{0,-1,-2,\ldots\}$, $\Lambda^{-\alpha}(n)=0$ for $n> -\alpha.$} To see more properties of $\{\Lambda^{-\alpha}(n)\}_{n\in \Nset_0}$ in a general setting, see \cite{Zygmund}.

As it was done in \cite{Ciaurri}, we also need to consider our functions in a particular space in order to assure the convergence of our operators. For $0<\alpha<1$, we define the space {$\ell_{-\alpha,h}$} as follows:
\begin{equation}\label{espacio}
\ell_{-\alpha, h}=\left\{u:\Zset_h\to\Rset: \:\text{for every}\;n\in \Zset,\: \sum_{m=0}^\infty \frac{|u(m\pm n)h)|}{(1+m)^{1-\alpha}}<\infty\right\}.
\end{equation}
	
Hence, by using \eqref{'eq19'}, for $0<\alpha<1$,  and $f\in \ell_{-\alpha,1},$ we have
	  \begin{align*}
	  (\delta_{\rm right})^{-\alpha} f(n)&=\frac{1}{\Gamma(\alpha)}\int_0^\infty \frac{e^{-t}\sum_{j=0}^\infty \frac{t^j}{j!}f(n+j)}{t^{1-\alpha}}dt=\sum_{j=0}^\infty f(n+j)\int_0^\infty\frac{e^{-t}t^{j+\alpha}}{j!\Gamma(\alpha)}\frac{dt}{t}\\
	  &=\sum_{j=0}^\infty f(n+j) \frac{\Gamma(\alpha+j)}{\Gamma(\alpha)j!}=\sum_{j=0}^\infty\Lambda^{-\alpha}(j) f(n+j)=\sum_{m=n}^\infty \Lambda^{-\alpha}(m-n) f(m),
	  \end{align*}
where the interchange of the sum and the integral is justified because of the integral converges absolutely. By a similar way we also get \begin{align*}(\delta_{\rm left})^{-\alpha}f(n)=\sum_{j=0}^\infty\Lambda^{-\alpha}(j) f(n-j)=\sum_{m=-\infty}^{n}\Lambda^{-\alpha}(n-m) f(m).\end{align*} Observe that, as we did in \cite{ALT}, by  proceeding similarly we get \begin{align*}(\delta_{\rm right})^\alpha f(n)=\sum_{m=n}^\infty\Lambda^\alpha(m-n) f(m),\quad (\delta_{\rm left})^\alpha f(n)=\sum_{m=-\infty}^n\Lambda^\alpha(n-m) f(m),\quad n\in \Nset_0.\end{align*}
	
Now we will consider our functions on $\Zset_h=h\Zset,$ for $h>0$. Let $u:\Zset_h\to\Rset.$ { Then, for $0<\alpha<1$,} we can write
	  \begin{equation}\label{fract1}
	  (\delta_{\rm right})^{-\alpha}u(nh)={h^{\alpha}}\sum_{m=n}^{\infty}\Lambda^{-\alpha}(m-n)u(mh), \:\; \:\:\; (\delta_{\rm left})^{-\alpha}u(nh)={h^{\alpha}}\sum^{n}_{m=-\infty}\Lambda^{-\alpha}(n-m)u(mh),
	  \end{equation}
	
	  \begin{equation}\label{fract2}
	  (\delta_{\rm right})^{\alpha}u(nh)=\frac{1}{h^{\alpha}}\sum_{m=n}^{\infty}\Lambda^\alpha(m-n) u(mh) \:\; \hbox{  and  }\:\; 	  (\delta_{\rm left})^{\alpha}u(nh)=\frac{1}{h^{\alpha}}\sum_{j=-\infty}^{n}\Lambda^\alpha(n-m)u(mh),
	  \end{equation}
whenever the series converge.

{In general, for any $\alpha>0,$ it is defined }\begin{align*}(\delta_{\rm right})^{\alpha}u=(\delta_{\rm right})^{m}(\delta_{\rm right})^{\alpha-m}u,\quad (\delta_{\rm right})^{-\alpha}u=(\delta_{\rm right})^{-m}(\delta_{\rm right})^{-(\alpha-m)}u,\end{align*} where $m=[\alpha].$ {In addition, in our case, by \eqref{conv} we have that formulas \eqref{fract1} and \eqref{fract2}  are valid for every $\alpha>0.$ } Also, by \eqref{conv} we have \begin{equation*}
(\delta_{\rm right})^{-\alpha}(\delta_{\rm right})^{\alpha}u(nh)=u(nh),\quad n\in\Zset, u\in\ell^{p}(\Zset_h).
\end{equation*}
Furthermore, for $\alpha,\beta\in\Rset$, we have \begin{equation*}
(\delta_{\rm right})^{\alpha}(\delta_{\rm right})^{\beta}u(nh)=(\delta_{\rm right})^{\alpha+\beta}u(nh),\quad n\in\Zset,
\end{equation*}
for $u$ such that the series involved in the identity converge.

\section{Regularity results of Discrete Fractional Integrals}\label{regularidad}
	
	   Following the notation in \cite{ALT} and \cite{Ciaurri}, for $l,s\in\Nset_0,$ we denote $\delta_{\rm right, \rm left}^{l,s}:=(\delta_{\rm right})^{l}(\delta_{\rm left})^{s}.$
	
	  \begin{definition}\textrm{(\cite[Definition 2.1]{Ciaurri}).} Let $0<\beta\leq 1$ and $k\in\Nset_0.$ A function $u:\Zset_{h} \to \Rset$ belongs to the discrete H\"older space $C_{h}^{k,\beta}$ if \begin{align*}[\delta_{\rm right, \rm left}^{l,s}u]_{C_{h}^{0,\beta}}=\displaystyle\sup_{m\neq j}\frac{|\delta_{\rm right, \rm left}^{l,s}u(jh)-\delta_{\rm right, \rm left}^{l,s}u(hm)|}{h^{\beta}|j-m|^{\beta}}<\infty\end{align*} for each pair $l,s\in\Nset_0$ such that $l+s=k.$ The norm in the spaces $C_{h}^{k,\beta}$ is given by \begin{align*}\lVert u\rVert_{C_{h}^{k,\beta}}=\displaystyle\max_{l+s\leq k}\sup_{m\in\Zset}|\delta_{\rm right, \rm left}^{l,s}u(mh)|+\max_{l+s=k}[\delta_{\rm right, \rm left}^{l,s}u]_{C_{h}^{0,\beta}}.\end{align*}
	  \end{definition}
	
For simplicity, we only write the following theorem for $(\delta_{\rm right})^{-\alpha}$ since it is analogous for $(\delta_{\rm left})^{-\alpha}.$

To prove this theorem we need a lemma about the kernel $\Lambda^{-\alpha}$.
	
\begin{lemma} \label{nulo}
	   \begin{enumerate}
	   	\item For every $j\in\Nset_0$, and $\alpha\in\Rset$, $\Lambda^{-\alpha}(j+1)-\Lambda^{-\alpha}(j)=\Lambda^{-(\alpha-1)}(j+1)$.
	   	\item For every $n,l\in\Zset$, with $n>l$, and $0<\alpha<1$, \begin{align*}
	   	\sum_{m=n}^\infty(\Lambda^{-\alpha}(m-n)-\Lambda^{-\alpha}(m-l))-\sum_{m=l}^{n-1}\Lambda^{- \alpha}(m-l)=0.
	   	\end{align*}
	   \end{enumerate}
	   \end{lemma}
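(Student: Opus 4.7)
Part (1) is a computation from the definition. Writing $\Lambda^{-\alpha}(j) = \alpha(\alpha+1)\cdots(\alpha+j-1)/j!$ for $j\ge 1$ (and $\Lambda^{-\alpha}(0)=1$), I would factor the common product in
\[
\Lambda^{-\alpha}(j+1)-\Lambda^{-\alpha}(j)
=\frac{\alpha(\alpha+1)\cdots(\alpha+j-1)}{j!}\left(\frac{\alpha+j}{j+1}-1\right)
=\frac{(\alpha-1)\alpha\cdots(\alpha+j-1)}{(j+1)!},
\]
and recognise the right-hand side as $\Lambda^{-(\alpha-1)}(j+1)$. The case $j=0$ is checked directly. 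An even shorter alternative is to use the generating function: multiplying $\sum_{m\ge 0}\Lambda^{-\alpha}(m)z^m=(1-z)^{-\alpha}$ by $(1-z)$ yields $(1-z)^{-(\alpha-1)}=\sum_{m\ge 0}\Lambda^{-(\alpha-1)}(m)z^m$, and matching the coefficient of $z^{j+1}$ gives the identity.

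For Part (2), I would start by reindexing. Setting $k=n-l>0$ and $j=m-n$ in the first sum transforms the identity into
\[
\sum_{j=0}^{\infty}\bigl(\Lambda^{-\alpha}(j)-\Lambda^{-\alpha}(j+k)\bigr)=\sum_{i=0}^{k-1}\Lambda^{-\alpha}(i).
\]
Because each of $\sum_j\Lambda^{-\alpha}(j)$ and $\sum_j\Lambda^{-\alpha}(j+k)$ diverges for $0<\alpha<1$, I cannot split the sum. Instead I take a finite partial sum and telescope:
\[
\sum_{j=0}^{N}\bigl(\Lambda^{-\alpha}(j)-\Lambda^{-\alpha}(j+k)\bigr)
=\sum_{j=0}^{k-1}\Lambda^{-\alpha}(j)-\sum_{i=N+1}^{N+k}\Lambda^{-\alpha}(i).
\]
Finally I invoke the asymptotic estimate \eqref{asym}, which gives $\Lambda^{-\alpha}(i)=O(i^{\alpha-1})\to 0$ as $i\to\infty$ since $0<\alpha<1$. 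Hence the last sum on the right has $k$ fixed terms each of order $N^{\alpha-1}$ and so vanishes as $N\to\infty$, yielding the claim.

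\textbf{Main obstacle.} The delicate point is purely a convergence issue in Part (2): one has to resist splitting the series, and must instead telescope \emph{first} and pass to the limit afterwards, using the decay of $\Lambda^{-\alpha}(n)$ as $n\to\infty$ (which is where the hypothesis $0<\alpha<1$ enters). Part (1) is routine.
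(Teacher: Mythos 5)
Your proof is correct; part (1) coincides with the paper's computation (the generating-function variant, matching coefficients of $z^{j+1}$ in $(1-z)\cdot(1-z)^{-\alpha}=(1-z)^{-(\alpha-1)}$, is an acceptable alternative), but your argument for part (2) is genuinely different from the paper's. The paper deduces (2) from (1): it writes each difference $\Lambda^{-\alpha}(m-n)-\Lambda^{-\alpha}(m-l)$ as a telescoping sum of the terms $-\Lambda^{-(\alpha-1)}(m-(n-1)),\dots,-\Lambda^{-(\alpha-1)}(m-l)$, sums in $m$ using the kernel identity $\sum_{m\ge k}\Lambda^{-(\alpha-1)}(m-k)=0$ (valid because $-1<\alpha-1<0$), and then telescopes back with (1) to reassemble $\sum_{m=l}^{n-1}\Lambda^{-\alpha}(m-l)$. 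You instead reindex with $k=n-l$, telescope the finite partial sums $\sum_{j=0}^{N}\bigl(\Lambda^{-\alpha}(j)-\Lambda^{-\alpha}(j+k)\bigr)=\sum_{j=0}^{k-1}\Lambda^{-\alpha}(j)-\sum_{i=N+1}^{N+k}\Lambda^{-\alpha}(i)$, and kill the tail of $k$ terms of size $O(N^{\alpha-1})$ via \eqref{asym}, which simultaneously proves convergence of the series and the identity. Your route is more elementary: it does not use part (1) at all in part (2), nor the fact that $\sum_{n\ge 0}\Lambda^{-(\alpha-1)}(n)=0$, and it makes the convergence bookkeeping completely transparent (you rightly refuse to split the divergent series). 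What the paper's route buys is a structural link between the two parts of the lemma and a rehearsal of the cancellation identity \eqref{dif} for $\Lambda^{-(\alpha-1)}$, which is reused later in the proof of Theorem \ref{Teo}; your shortcut does not produce that estimate, but it is not needed for the lemma itself.
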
	
\begin{proof}
At first we prove (1). Observe that $\Lambda^{-\alpha}(1)-\Lambda^{-\alpha}(0)=\alpha-1=\Lambda^{-(\alpha-1)}(1)$. Let $j\in \Nset$. We have that
	   	 	   	 \begin{align*}
	   	  \Lambda^{-\alpha}(j+1)-\Lambda^{-\alpha}(j))&=\frac{\alpha(\alpha+1)\dots(\alpha+j-1)}{j!}\left(\frac{\alpha+j}{j+1}-1 \right)=\frac{(\alpha+j-1)!}{(\alpha-1)!j!}\left( \frac{\alpha-1}{j+1}\right)\\
	   	  &=\frac{\Gamma(j+\alpha)}{(j+1)!\Gamma(\alpha-1)}=\Lambda^{-(\alpha-1)}(j+1).
	   	 \end{align*}
	   	
	   	
Now we prove (2). Let $n,l\in\Zset$, with $n>l$, and $0<\alpha<1$.	By using the identity in (1) we obtain
	   		   	\begin{align*}
	   		   	\sum_{m=n}^\infty(\Lambda^{-\alpha}(m-n)-\Lambda^{-\alpha}(m-l))&=\sum_{m=n}^\infty(\Lambda^{-\alpha}(m-n)-\Lambda^{-\alpha}(m-(n-1))+\Lambda^{-\alpha}(m-(n-1))\\&\quad\quad+\dots\
	   		   	 +\Lambda^{-\alpha}(m-l-1)-\Lambda^{-\alpha}(m-l))\\
	   		   	&=	\sum_{m=n}^\infty(-\Lambda^{-(\alpha-1)}(m-(n-1))-\Lambda^{-(\alpha-1)}(m-(n-2))\\&\quad\quad-\dots-\Lambda^{-(\alpha-1)}(m-l)).
	   		   	\end{align*}
	   	Again, as $\sum_{m=k}^\infty\Lambda^{-(\alpha-1)}(m-k)=0$, we have that
	   	\begin{align*}
	   		  & -\sum_{m=n}^\infty\Lambda^{-(\alpha-1)}(m-(n-1))=\Lambda^{-(\alpha-1)}(0)=\Lambda^{-\alpha}(0)\\
	   		 &-\sum_{m=n}^\infty\Lambda^{-(\alpha-1)}(m-(n-2))=\Lambda^{-(\alpha-1)}(0)+\Lambda^{-(\alpha-1)}(1)\\
	   		  &\vdots\\
	   		  & 	-\sum_{m=n}^\infty\Lambda^{-(\alpha-1)}(m-l)=\Lambda^{-(\alpha-1)}(0)+\Lambda^{-(\alpha-1)}(1)+\dots+\Lambda^{-(\alpha-1)}(n-l-1).\\
	   	\end{align*}
	   	Thus, using again  identity on (1) we get
	   	\begin{align*}
	   	&\sum_{m=n}^\infty(\Lambda^{-\alpha}(m-n)-\Lambda^{-\alpha}(m-l))\\
&=(n-l)\Lambda^{-\alpha}(0)+(n-l-1)\Lambda^{-(\alpha-1)}(1)+\dots +\Lambda^{-(\alpha-1)}(n-l-1)\\
&=\sum_{m=l}^{n-1}\Lambda^{- \alpha}(m-l),
	   	\end{align*}
and the result follows.
	   \end{proof}
	   \newpage

Now we can prove Theorem \ref{Teo}.

{\it Proof of Theorem \ref{Teo}}.

Let $n,l\in\Zset,$ we assume $n>l$ without loss of generality. First let $u\in C_{h}^{0,\beta}$ and $\alpha+\beta<1$. By using Lemma \ref{nulo} (2) we can write
	\begin{align*}
	h^{-\alpha}&[(\delta_{\rm right})^{-\alpha}u(nh)-(\delta_{\rm right})^{-\alpha}u(lh)]=	\sum_{m=n}^\infty\Lambda^{-\alpha}(m-n)u(mh)-\sum_{m=l}^\infty
	\Lambda^{-\alpha}(m-l) u(mh)\\
	&=\sum_{m=n}^\infty(\Lambda^{-\alpha}(m-n)-\Lambda^{-\alpha}(m-l))(u(mh)-u(lh))-\sum_{m=l}^{n-1}\Lambda^{-\alpha}(m-l)(u(mh)-u(lh))\\
	&=I+II.
		\end{align*}
On the one hand, by using estimate \eqref{asym} and the hypothesis on $u$, we get
	 \begin{align*}
	 |II|&\le C[u]_{C^{0,\beta}_h}h^{\beta}\sum_{m=l+1}^{n-1}\frac{|m-l|^\beta}{|m-l|^{1-\alpha}}= C[u]_{C^{0,\beta}_h}h^{\beta}\sum_{k=1}^{n-1-l}\frac{1}{k^{1-\alpha-\beta}}\le C[u]_{C^{0,\beta}_h}h^{\beta}(n-l)^{\alpha+\beta}.
	 \end{align*}
Before doing the estimation for $I$, observe that, as $n>l$, by \eqref{asym} we have that $|\Lambda^{-\alpha}(m-n)|\le \frac{C}{(m-n)^{1-\alpha}}$ and $|\Lambda^{-\alpha}(m-l)|\le \frac{C}{(m-n)^{1-\alpha}}$ for $m\geq n+1.$ Also, by using Lemma \ref{nulo} (1) and  \eqref{asym} we get that
	\begin{align}\label{dif}
	&|\Lambda^{-\alpha}(m-n)-\Lambda^{-\alpha}(m-l)|\nonumber\\
&=|-\Lambda^{-(\alpha-1)}(m-(n-1))-\Lambda^{-(\alpha-1)}(m-(n-2))-\dots-\Lambda^{-(\alpha-1)}(m-l)|\nonumber\\
& \le \frac{C|n-l|}{(m-(n-1))^{2-\alpha}}\le \frac{C|n-l|}{(m-n)^{2-\alpha}}\quad m\geq n+1.
	\end{align}
Hence, by using the comments above, the hypothesis on $u$ and \eqref{asym}, we obtain that
		\begin{align*}
			|I|&\le  C[u]_{C^{0,\beta}_h}h^{\beta}\left(|n-l|^\beta+\frac{|n-l|^\beta}{(n-l)^{1-\alpha}}+\sum_{m=n+1}^{2n-l}\frac{|m-l|^\beta}{(m-n)^{1-\alpha}}+ \sum_{m=2n-l+1}^\infty \frac{(n-l)|m-l|^\beta}{(m-n)^{2-\alpha}}\right)\\
		&\le C[u]_{C^{0,\beta}_h}h^{\beta}\left((|n-l|^{\alpha+\beta}+ \sum_{k=1}^{n-l}\frac{k^\beta+(n-l)^\beta}{k^{1-\alpha}}+ \sum_{k=n-l+1}^\infty \frac{(n-l)(k^\beta+(n-l)^\beta)}{k^{2-\alpha}}\right)\\
		&\le C[u]_{C^{0,\beta}_h}h^{\beta}(n-l)^{\alpha+\beta}.
	 \end{align*}

Now suppose that $u\in C_{h}^{0,\beta}$ with $\alpha+\beta>1$. By the definition of the space $C^{1,\alpha+\beta-1}_h$, we have to prove that $\delta_{\rm right}((\delta_{\rm right})^{-\alpha}u)\in C_{h}^{0,\alpha+\beta-1}$. By using $\delta_{\rm right}((\delta_{\rm right})^{-\alpha}u)=(\delta_{\rm right})^{1-\alpha}u$  and  \cite[Theorem 3.2]{ALT}, we conclude that $\delta_{\rm right}((\delta_{\rm right})^{-\alpha}u)\in C_{h}^{0,\alpha+\beta-1}$, so the result follows.

We prove statement (iii) for $k=1$. The other cases follow by iteration.

Let $u\in C^{1,\beta}_h$ and $\alpha+\beta<1$. By hypothesis, $\delta_{\rm right}u$  belongs to $C^{0,\beta}_h$. We want to prove that $\delta_{\rm right}^{-\alpha}u\in C^{1,\alpha+\beta}_h$, that is, $\delta_{\rm right}(\delta_{\rm right}^{-\alpha})u=\delta_{\rm right}^{-\alpha}(\delta_{\rm right}u)\in C^{0,\alpha+\beta}_h$, and this is consequence of (i).

 Now suppose that $u\in C^{1,\beta}_h$ and $\alpha+\beta>1$. By hypothesis, $\delta_{\rm right}u\in C^{0,\beta}_h$. We want to prove that $\delta_{\rm right}^{-\alpha}u\in C^{2,\alpha+\beta-1}_h$, that is, $(\delta_{\rm right})^2(\delta_{\rm right}^{-\alpha}u)=\delta_{\rm right}(\delta_{\rm right}^{1-\alpha})u\in C^{0,\alpha+\beta-1}_h$.
 By using (ii), we have that  $\delta_{\rm right}^{-\alpha}(\delta_{\rm right}u)=\delta_{\rm right}^{1-\alpha}u\in C^{1,\alpha+\beta-1}_h$, and by the definition of the space $C^{1,\alpha+\beta-1}_h,$ we conclude that $\delta_{\rm right}(\delta_{\rm right}^{1-\alpha})u\in C^{0,\alpha+\beta-1}_h$.

Finally,  assume that $u\in\ell^\infty$. Again, we can write
	 \begin{align*}
	 h^{-\alpha}[(\delta_{\rm right})^{-\alpha}u(nh)-(\delta_{\rm right})^{-\alpha}u(lh)]&=\sum_{m=n}^\infty(\Lambda^{-\alpha}(m-n)-\Lambda^{-\alpha}(m-l))u(mh)\\&-\sum_{m=l}^{n-1}\Lambda^{-\alpha}(m-l)u(mh).
\end{align*}

By using \eqref{dif}, we have
\begin{align*}
\left|\sum^\infty_{m=2n-l+1}(\Lambda^{-\alpha}(m-n)-\Lambda^{-\alpha}(m-l))u(mh)\right|\le \|u\|_\infty\sum_{m=2n-l+1}^\infty\frac{n-l}{(m-n)^{2-\alpha}}\le C \|u\|_\infty (n-l)^\alpha
\end{align*}
and by using \eqref{asym}, we get that
\begin{align*}
&\left|\sum_{m=n}^{2n-l}(\Lambda^{-\alpha}(m-n)-\Lambda^{-\alpha}(m-l))u(mh)\right|\\&\le  C \|u\|_\infty\left(1+\frac{1}{(n-l)^{1-\alpha}}+\sum_{m=n+1}^{2n-l}(|\Lambda^{-\alpha}(m-n)|+|\Lambda^{-\alpha}(m-l)|)\right)\\
&\le  C \|u\|_\infty\left(1+\frac{1}{(n-l)^{1-\alpha}}+\sum_{m=n+1}^{2n-l}\frac{1}{(m-n)^{1-\alpha}}\right)\le C \|u\|_\infty (n-l)^\alpha
\end{align*}
and
\begin{align*}
\left|\sum_{m=l}^{n-1} \Lambda^{-\alpha}(m-l)u(mh)\right|\le  C \|u\|_\infty\left(1+ \sum_{m=l+1}^{n-1}\frac{1}{(m-l)^{1-\alpha}}\right)\le C \|u\|_\infty (n-l)^\alpha.
\end{align*}

$\hfill {\Box}$





%
\vspace{1cm}
  
  \bibliographystyle{plain}
  \bibliography{biblio}

\begin{thebibliography}{10}

\bibitem{A}
Luciano Abadias.
\newblock A {K}atznelson-{T}zafriri type theorem for {C}es\`aro bounded
  operators.
\newblock {\em Studia Math.}, 234(1):59--82, 2016.

\bibitem{ALT}
Luciano Abadias, Marta De~Le\'on-Contreras, and Jos\'e~L. Torrea.
\newblock Non-local fractional derivatives. {D}iscrete and continuous.
\newblock {\em J. Math. Anal. Appl.}, 449(1):734--755, 2017.

\bibitem{ALMV}
Luciano Abadias, Carlos Lizama, Pedro~J. Miana, and M.~Pilar Velasco.
\newblock Ces\`aro sums and algebra homomorphisms of bounded operators.
\newblock {\em Israel J. Math.}, 216(1):471--505, 2016.

\bibitem{Bernardis}
Ana Bernardis, Francisco~J. Mart\'in-Reyes, Pablo~Ra\'ul Stinga, and Jos\'e~L.
  Torrea.
\newblock Maximum principles, extension problem and inversion for nonlocal
  one-sided equations.
\newblock {\em J. Differential Equations}, 260(7):6333--6362, 2016.

\bibitem{BCFR}
Jorge~J. Betancor, Alejandro~J. Castro, Juan~C. Fari\~na, and Lourdes
  Rodr\'iguez-Mesa.
\newblock Discrete harmonic analysis associated with ultraspherical expansions.
\newblock {\em arXiv:1512.01379}.

\bibitem{CGRTV}
\'Oscar Ciaurri, A.~Gillespie, Luz Roncal, Jos\'e~L. Torrea, and Juan~L.
  Varona.
\newblock Harmonic {A}nalysis associated with a discrete {L}aplacian.
\newblock {\em arXiv: 1401.2091v2}.

\bibitem{Ciaurri}
\'Oscar Ciaurri, Luz Roncal, Pablo~Ra\'ul Stinga, Jos\'e~L. Torrea, and Juan~L.
  Varona.
\newblock Nonlocal discrete diffusion equations and the fractional discrete
  laplacian, regularity and applications.
\newblock {\em arXiv:1608.08913}.

\bibitem{L}
Carlos Lizama.
\newblock The {P}oisson distribution, abstract fractional difference equations,
  and stability.
\newblock {\em Proceedings of the American Mathematical Society}.

\bibitem{Stein}
Elias~M. Stein.
\newblock {\em Singular integrals and differentiability properties of
  functions}.
\newblock Princeton Mathematical Series, No. 30. Princeton University Press,
  Princeton, N.J., 1970.

\bibitem{ST}
Pablo~Ra\'ul Stinga and Jos\'e~Luis Torrea.
\newblock Extension problem and {H}arnack's inequality for some fractional
  operators.
\newblock {\em Comm. Partial Differential Equations}, 35(11):2092--2122, 2010.

\bibitem{ST2}
Pablo~Ra\'ul Stinga and Jos\'e~Luis Torrea.
\newblock Regularity theory for the fractional harmonic oscillator.
\newblock {\em J. Funct. Anal.}, 260(10):3097--3131, 2011.

\bibitem{Yosida}
K\^osaku Yosida.
\newblock {\em Functional analysis}.
\newblock Springer-Verlag, Berlin-New York, fifth edition, 1978.
\newblock Grundlehren der Mathematischen Wissenschaften, Band 123.

\bibitem{Zygmund}
A.~Zygmund.
\newblock {\em Trigonometric series. 2nd ed. {V}ols. {I}, {II}}.
\newblock Cambridge University Press, New York, 1959.

\end{thebibliography}


%



%
%

\end{document}